\newtheorem{thm}{Theorem}[section]
\newtheorem{lem}[thm]{Lemma}
\theoremstyle{remark}
\newtheorem{rem}{Remark}[section]
\begin{document}

\title[A DG method for coagulation-fragmentation equations]
{A high order positivity preserving DG method for coagulation-fragmentation equations}

\author[H. Liu \and R. Gr\"opler \and G. Warnecke]
  {Hailiang Liu$^*$ \and Robin Gr\"opler$^{**}$ \and Gerald Warnecke$^{**}$}
\thanks{$^*$Mathematics Department, Iowa State University, Ames, IA 50011
  (\href{mailto:hliu@iastate.edu}{hliu@iastate.edu})}
\thanks{$^{**}$Institute for Analysis and Numerics, Otto-von-Guericke University Magdeburg, 
Universit\"atsplatz 2, 39106 Magdeburg, Germany
(\href{mailto:warnecke@ovgu.de}{warnecke@ovgu.de})}

\date{October 2, 2017}

\begin{abstract} We design, analyze and numerically validate a novel discontinuous Galerkin method
for solving the coagulation-fragmentation equations. The DG discretization is applied to the
conservative form of the model, with flux terms evaluated by Gaussian quadrature with $Q=k+1$
quadrature points for polynomials of degree $k$. The positivity of the numerical solution is
enforced through a simple scaling limiter based on positive cell averages. The positivity of
cell averages is propagated by the time discretization provided a proper time step restriction is imposed. 
\end{abstract}

\maketitle

{\bf Key words.} population balance equation, aggregation, breakage, discontinuous Galerkin method, conservation law, particle systems, high order accuracy \vspace{14pt}

{\bf AMS subject classifications.} 65M60, 65M12, 65R20, 35L65, 82C22 \vspace{14pt}


\section{Introduction} The aggregation-breakage population balance equations (PBEs) are the 
models for the growth of particles by combined effect of aggregation and breakage.
These equations are a type of partial integro-differential equations which are also known as 
coagulation-fragmentation equations. 
These models describe the dynamics of particle growth and the time evolution of a system of 
particles under the combined effect of aggregation, or coagulation, and breakage, or 
fragmentation. Each particle is identified by its size, or volume, which is assumed to be a 
positive real number. From a physical point of view the basic mechanisms taken into account 
are the coalescence of two particles to form a larger one and the breakage of particles into 
smaller ones. These models are of substantial interest in many areas of science and 
engineering.

The equations we consider in this paper describe the time evolution of the particle size 
distribution (PSD) under the simultaneous effect of binary aggregation and multiple breakage. 
The objective of this work is to design a high order discontinuous Galerkin method for these 
equations, so that the numerical solution is highly accurate, and remains non-negative.

In 1917, Smoluchowski \cite{Sm17} proposed the discrete aggregation model in order to describe 
the coagulation of colloids moving according to a Brownian motion which is known as 
Smoluchowski coagulation equation. In 1928, M\"{u}ller \cite{Mu28} provided the continuous 
version of this equation as
\[
\partial_t f(t,x) = \frac{1}{2}\int_0^x K(x-y,y)f(t,x-y)f(t,y)dy
                    - \int_0^\infty K(x,y)f(t,x)f(t,y)dy
\]
with 
\[
f(0,x)=f_0(x). 
\]
Here the variables $x\geq 0$ and $t \geq 0$ denote the size of the particles and time, 
respectively. The number density of particles of size $x$ at time $t$ is denoted by $f(t,x)$. 
The coagulation kernel $K(x, y) \geq 0$ represents the rate at which the particles of size $x$ 
coalesce with particles of size $y$ and is assumed to be symmetric, i.e.\ $K(x, y) = K(y, x)$. 
Later, analogous models for breakage or fragmentation were developed. 
The continuous version of the coagulation and multiple fragmentation equations has been 
investigated, one of the models is of the form 
\begin{align}\label{cmf}
\partial_t f(t,x) =\ & \frac{1}{2}\int_0^x K(x-y,y)f(t,x-y)f(t,y)dy 
                      - \int_0^\infty K(x,y)f(t,x)f(t,y)dy \\
                     & + \int_x^\infty b(x,y)S(y)f(t,y)dy - S(x)f(t,x). \notag
\end{align}
Here the breakage function $b(x,y)$ is the probability density function for the formation of 
particles of size $x$ from the particles of size $y$. It is non-zero only for $x < y$. 
The selection function $S(x)$ describes the rate at which particles of size $x$ are selected 
to break. The selection function $S$ and breakage function $b$ are defined in terms of the 
multiple-fragmentation kernel $\Gamma(x,y)$ as
\[
S(x)=\int_0^x \frac{y}{x}\,\Gamma(x,y)dy, \quad b(x,y)=\Gamma(y,x)/S(y).
\]
This equation is usually referred as the generalized coagulation-fragmentation equation, 
as fragmenting particles can split into more than two pieces. Under some growth conditions 
solutions are shown to exist in the space 
\[
X= \left\{ f\in L^1: \ \int_0^\infty (1+x)f dx <\infty,\ f\geq 0\ \text{a.e.} \right\}
\]
for non-negative initial data $f_0\in X$.

In aggregation-breakage processes the total number of particles varies in time while the 
total mass of particles remains conserved. In terms of $f$, the total number of particles 
and the total mass of particles at time $t\geq 0$ are respectively given by the moments
\[
M_0(t) := \int_0^\infty f(t,x)dx, \quad M_1(t) :=\int_0^\infty xf(t,x)dx.
\]
It is easy to show that the total number of particles $M_0(t)$ decreases by aggregation and 
increases by breakage processes while the total mass $M_1(t)$ does not vary during these 
events. The total mass conservation
\[
M_1(t)=M_1(0)
\]
holds. However, for some special cases of $K$ when it is sufficiently large compared to the 
selection function $S$, a phenomenon called gelation which has to do with a phase transition 
occurs. In this case the total mass of particles is not conserved, see Escobedo et al.\ 
\cite{ELMP03} and further citations for details.
 
Mathematical results on existence and uniqueness of solutions of the equation (\ref{cmf}) 
and further citations can be found in McLaughlin et al.\ \cite{MLM98} and Lamb \cite{La04} 
for rather general aggregation kernels, breakage and selection functions.
However, the equation can only be solved analytically for a limited number of simplified 
problems, see Ziff \cite{Zi91}, Dubovskii et al.\ \cite{DGS92} and the references therein. 
This leads to the necessity of using numerical methods for solving general equations. 
Several such numerical methods have been introduced. Stochastic methods (Monte-Carlo) have been 
developed, see Lee and Matsoukas \cite{LM00} for solving equations of aggregation with binary 
breakage. Finite element techniques can be found in Mahoney and Ramkrishna \cite{WR02} and the 
references therein for the equations of simultaneous aggregation, growth and nucleation.
Some other numerical techniques are available in the literature such as the method of 
successive approximations by Ramkrishna \cite{Ra00}, method of moments \cite{MM04, MF05}, 
finite volume methods \cite{MMG02, KKW13}, and sectional methods, as the Fixed Pivot and 
the Cell Average Technique \cite{Ku06, KR96, Va02}, to solve such PBEs. 
There also exist spline methods \cite{EEE94} and a DG method \cite{Sa06} for the aggregation 
process. All these methods are applied to the standard form of the aggregation-breakage equation 
and have to deal with the problem of mass conservation.

An alternative numerical approach is based on the mass balance formulation. An application of a 
Finite Volume Scheme (FVS) was introduced by Filbet and Lauren\c{c}ot \cite{FL04} for solving 
the aggregation problem. Further, Bourgade and Filbet \cite{BL07} have extended their scheme 
to solve the case of binary aggregation and binary breakage PBEs. For a special case of a 
uniform mesh they have shown error estimates of first order. The scheme has also been extended 
to two-dimensional aggregation problems by Qamar and Warnecke \cite{QW07}. Finally it has been 
observed that the FVS is a good alternative to the methods mentioned above for solving the PBEs 
due to its automatic mass conservation property. An analysis of the finite volume method to 
solve the aggregation with multiple breakage PBEs on general meshes is given in \cite{KKW14}. 
The DG method presented in this work may be seen as a natural extension of the above FVSs.

This paper is organized as follows. First, we derive the DG scheme to solve aggregation-breakage 
PBEs in Section 2. Then in Section \ref{IMP} details of the implementation are given. Later on, the scheme is 
numerically tested for several problems in Section \ref{NUM}. Further, Section \ref{CONC} 
summarizes some conclusions.

\section{Method description}\label{METHOD}
In this section a discontinuous Galerkin (DG) method for solving aggregation-breakage PBEs is 
discussed. Following \cite{FL04} for aggregation, a new form of the breakage PBE is presented 
in order to apply the DG method efficiently.
\subsection{Conservation formulation} Writing the aggregation and breakage terms in divergence 
form enables us to get a precise amount of mass dissipation or conservation. It can be written 
in a conservative form of mass density $n(t, x)=xf(t, x)$, 
\begin{equation}\label{main}
\partial_t n(t,x)+\partial_x (F_a(t,x)+F_b(t,x))=0, \quad n(0,x)=n_0(x)\geq 0\ \text{a.e.},
\end{equation}
where the aggregation flux is 
\[
F_a(t,x)=\int_0^x \int_{x-u}^\infty A(u,v)n(t,u)n(t,v)dv du, \quad A(u,v)=K(u,v)/v,
\]
and the breakage flux is 
\[
F_b(t,x)=-\int_x^\infty \int_{0}^x B(u,v)n(t,v)du dv, \quad B(u,v)=ub(u,v)S(v)/v.
\]
It should be noted that both forms of aggregation-breakage PBEs (\ref{cmf}) and (\ref{main}) 
are interchangeable by using the Leibniz integration rule.
It should also be mentioned that the equation (\ref{main}) reduces to the case of pure 
aggregation or pure breakage process when $F_b$ or $F_a$ is zero, respectively.

In the PBE (\ref{main}) the volume variable $x$ ranges from $0$ to $\infty$. In order to apply 
a numerical scheme for the solution of the equation a first step is to truncate the problem 
and fix a finite computational domain $\Omega:=[0, L]$ for an $0<L< \infty$.

\subsection{DG formulation} Discontinuous Galerkin (DG) methods are a class of finite element 
schemes used to solve mainly conservative PDEs.

We develop a discontinuous Galerkin (DG) method for (\ref{main}) subject to initial data 
$n_0(x)$. Let us partition the interval $\Omega = [0,L]$ into
$0=x_{1/2},x_{3/2},\ldots,x_{N+1/2}=L$ to get $N$ subintervals and denote each cell by 
$I_j = (x_{j-1/2}, x_{j+1/2}]$, $j = 1,\ldots,N$.
Each cell has the length $h_j=x_{j+1/2}-x_{j-1/2}$ and we set $h=\max_j h_j$ to be the mesh size. 
The representative of each cell, usually the center of each cell, 
$x_j = \frac{1}{2}\left( x_{j-1/2} + x_{j+1/2} \right)$, is called pivot or grid point.
The piecewise polynomial space $V_h^k$ is defined as the space of polynomials of degree up 
to $k$ in each cell $I_j$, that is,
\begin{equation}
V_h^k = \{v : v|_{I_j} \in P^k(I_j),\ j = 1,\ldots,N\}.
\end{equation} 
Note that functions in $V_h^k$ are allowed to have discontinuities across cell interfaces.

The DG scheme is defined as follows: find $n_h\in V_h^k$ such that
\begin{equation}\label{dg}
\int_{I_j}\partial_t n_h \phi\,dx - \int_{I_j}F \partial_x \phi\,dx 
+ F\phi \left|_{\partial I_j}\right. = 0, \quad F = F_a+ F_b
\end{equation}
for all test functions $\phi$ in the finite element space $V_h^k$. Here we use the notation, 
$v|_{\partial I_j}=v(x_{j+1/2}^-) -v(x_{j-1/2}^+)$,
and $F_{j+1/2}$ is an appropriate approximation of the continuous flux function $F(t,x_{j+1/2})$. 
In case of a breakage process, the numerical flux may be approximated from the mass flux $F_b$; 
Similarly for the aggregation problem with flux $F_a$. In general, we have 
$F_{j+1/2}=F_{a,j+1/2} + F_{b,j+1/2}$. The initial condition $n_h(0,x) \in V_h^k$ is generated 
by the piecewise $L^2$ projection of $n_0(x)$, that is 
\[
\int_0^L \left(n_h(0,x)-n_0(x)\right)\phi(x)dx=0 \quad \text{for any}\ \phi \in V_h^k. 
\]
The semi-discrete DG scheme (\ref{dg}) is complete.

\subsection{Flux evaluation}\label{subsec:flux}
For the numerical integration of the fluxes we use Gaussian quadrature of order $Q$ with the 
Gauss evaluation points $s_\alpha \in (-1, 1)$ and the weights $\omega_\alpha>0$
\begin{equation}\label{gauss}
\int_{a}^b g(u)du = \frac{b-a}{2}\int_{-1}^1 g\left(\frac{b+a}{2}+\frac{b-a}{2}s\right)ds
= \frac{b-a}{2}\sum_{\alpha=1}^Q \omega_\alpha\,g\left(\frac{b+a}{2}+\frac{b-a}{2}s_\alpha\right)
+R_Q,
\end{equation}
where $R_Q=\mathcal{O}\left((b-a)^{2Q}\right)$ is the approximation residual, which is zero when 
$g$ is a polynomial of degree at most $2Q-1$. We will later use $Q=k+1$, see Remark \ref{remQ}.

First, consider the boundary term in the DG scheme (\ref{dg}). Denote the quadrature points in 
$I_j$ as $\hat x_{j}^\alpha=x_j+\frac{h_j}{2}s_\alpha$ for $\alpha = 1,\ldots,Q$.
Then the aggregation flux 
\[
F_a(t,x_{j+1/2})=\int_0^{x_{j+1/2}}\int_{x_{j+1/2}-u}^{x_{N+1/2}} A(u,v)n_h(t,u)n_h(t,v)dvdu
=\sum_{l=1}^{j} \int_{I_l} n_h(t,u)\Gamma_j(u)du
\]
with the partial flux
\[
\Gamma_j(u)=\int_{x_{j+1/2}-u}^{x_{N+1/2}} A(u,v) n_h(t,v)dv
\]
is approximated by the numerical flux
\begin{equation}\label{faj}
F_{a,j+1/2} = \sum_{l=1}^{j} \frac{h_l}{2}\sum_{\alpha=1}^Q \omega_\alpha 
n_h(t,\hat x_{l}^\alpha)\Gamma_{j,l}^\alpha,
\end{equation}
where $\Gamma_{j,l}^\alpha$ is an approximation to the partial flux $\Gamma_j(\hat x_{l}^\alpha)$ 
applying Gaussian quadrature rules, which we explain below.
Let the index $J$ be chosen such that $x_{j+1/2}-\hat x_{l}^\alpha\in I_{J}$, that is,
$x_{J-1/2} < x_{j+1/2}-\hat x_{l}^\alpha \leq x_{J+1/2}$, then
\[
\Gamma_j(\hat x_{l}^\alpha)=\int^{x_{J+1/2}}_{x_{j+1/2}-\hat x_{l}^\alpha} A(\hat x_{l}^\alpha,v)
n_h(t,v)dv + \sum_{i=J+1}^N \int_{I_i} A(\hat x_{l}^\alpha,v)n_h(t,v)dv.
\]
By the Gaussian quadrature formula (\ref{gauss}), the integral terms can be approximated as
\begin{equation}\label{gja}
\Gamma_{j,l}^\alpha = 
\frac{1}{2}(b_J-a_J) \sum_{\beta=1}^Q\omega_\beta A(\hat x_{l}^\alpha,y_J^\beta) n_h(t,y_J^\beta)
+\sum_{i=J+1}^N \frac{h_i}{2}\sum_{\beta=1}^Q \omega_\beta A(\hat x_{l}^\alpha,\hat x_{i}^\beta)
n_h(t,\hat x_{i}^\beta),
\end{equation}
where $a_J=x_{j+1/2}-\hat x_{l}^\alpha$, $b_J=x_{J+1/2}$ and the quadrature points in the first term are given by
\[
y_J^\beta=\frac{1}{2}(b_J+a_J)+ \frac{1}{2}(b_J-a_J) s_\beta.
\]
To be more precise, the index $J$ depends on $j$, $l$ and $\alpha$. 
Note, that in \eqref{gja} the polynomial function $n_h$ has to be evaluated at intermediate 
points $y_J^\beta$ for the approximation of the integral part here.

Next, we evaluate the breakage flux
\[
F_b(t,x_{j+1/2}) = -\int_{x_{j+1/2}}^{x_{N+1/2}} \int_0^{x_{j+1/2}} B(u,v) n_h(t,v)dudv
= -\sum_{l=j+1}^{N}\int_{I_l} n_h(t,v)G_j(v)dv
\]
with the partial flux
\[
G_j(v)=\sum_{i=1}^j\int_{I_i} B(u,v)du
\]
by the numerical flux
\begin{equation}\label{fjb}
F_{b,j+1/2} = -\sum_{l=j+1}^{N} \frac{h_l}{2} \sum_{\alpha=1}^Q \omega_\alpha 
n_h(t,\hat x_{l}^\alpha) G_{j,l}^\alpha
\end{equation}
where $G_{j,l}^\alpha$ is an approximation to the partial flux $G_j(\hat x_{l}^\alpha)$ and is 
given by
\begin{equation}
G_{j,l}^\alpha = \sum_{i=1}^j \frac{h_i}{2} \sum_{\beta=1}^Q \omega_\beta 
B(\hat x_{i}^\beta,\hat x_{l}^\alpha).
\end{equation}

Now, let us consider the second term in the DG scheme (\ref{dg}). We apply again Gaussian 
quadrature of order $Q$ for the approximation of the integral,
\begin{equation}\label{intF}
\int_{I_j} F\partial_x \phi\,dx  = \int_{-1}^1 F\partial_\xi \phi\,d\xi 
= \sum_{\gamma=1}^Q\omega_\gamma \phi'(s_\gamma) F(t,\hat{x}_j^\gamma).
\end{equation}
The approximation of the flux at the Gauss points, $F(t,\hat{x}_j^\gamma)$, is very similar to 
the approximation of the boundary terms above and is shown in the Appendix.

\begin{rem} \label{remQ}
For the approximation of the integral over a cell $I_j$, we consider the $Q$-point 
Gaussian quadrature which is accurate for polynomials up to degree $2Q-1$. 
In case of a linear and local flux function, $F$ would be of polynomial degree $k$ and 
$\phi'$ has maximal degree $k-1$, in total $2k-1$, corresponding to $Q=k$ Gauss points. 
In our case, the flux is non-local, quadrature rules are also needed for the evaluation 
of the flux. Note that $Q=k$ would be enough to make the scheme consistent. 
However, for our simulations we have chosen $Q=k+1$ since we observe an interesting type 
of superconvergence in this case which we will demonstrate in the numerical tests, 
see Section \ref{NUM}. For simplicity, we choose the same $Q$ for the numerical approximation 
of the partial fluxes which is not necessary.
\end{rem}

\subsection{Time discretization and positivity}\label{sec:time}
By taking the Forward Euler discretization in time to (\ref{dg}), we obtain a fully discrete scheme
\begin{equation}\label{fddg}
\int _{I_j}\frac{n^{m+1}_h-n^m_h}{\Delta t}\phi\,dx - \int _{I_j}F^m\phi_x\,dx 
+ F^m\phi \left|_{\partial I_j}\right.=0.
\end{equation}
Define the cell average of $n_h(x)$ on $I_j$ by
\begin{equation}\label{cellav}
\bar{n}_j := \frac{1}{h_j} \int_{I_j} n_h(x)\,dx.
\end{equation}
Let $\phi=\frac{\Delta t}{h_j}$ so that
\begin{equation}\label{fv}
\bar{n}^{m+1}_j = \bar{n}^m_j - \lambda_j [F^m_{j+1/2}-F^m_{j-1/2}],
\quad \lambda_j=\Delta t/h_j.
\end{equation}
Due to the exactness of quadrature rule for polynomials of degree $2k+1$, and especially of 
degree $k$, we have
\begin{equation}\label{qq}
\bar{n}_j^{m} = \frac{1}{h_j}\int_{I_j} n_h^m(x)dx
= \frac{1}{2} \sum_{\alpha=1}^Q \omega_{\alpha} n_h^m(\hat{x}^\alpha_j).
\end{equation}

For the proof of the following theorem we take a closer look at the differences of the flux at two neighboring interfaces. The flux difference for $F_a$ at time level $m$ can be reorganized to
\[
F_{a,j+1/2}^m-F_{a, j-1/2}^m
= -\sum_{l=1}^{j-1} \frac{h_l}{2} \sum_{\alpha=1}^Q \omega_\alpha n_h^m(\hat x_l^\alpha)
  \left(\Gamma_{j-1,l}^\alpha - \Gamma_{j,l}^\alpha\right)
  +\frac{h_j}{2} \sum_{\alpha=1}^Q \omega_\alpha n_h^m(\hat x_j^\alpha) \Gamma_{j,j}^\alpha,
\]
where the two terms on the right hand side can be seen as a type of birth and death term, respectively. For the use in the following theorem, define the first term as
\[
B_{a,j} = \sum_{l=1}^{j-1} \frac{h_l}{2} \sum_{\alpha=1}^Q \omega_\alpha n_h^m(\hat x_l^\alpha) 
\left(\Gamma_{j-1,l}^\alpha - \Gamma_{j,l}^\alpha\right).
\]
Similarly, the flux difference for $F_b$ at time level $m$ can be combined to
\[
F_{b,j+1/2}^m-F_{b, j-1/2}^m 
= -\sum_{l=j+1}^{N} \frac{h_l}{2} \sum_{\alpha=1}^Q \omega_\alpha n_h^m(\hat x_l^\alpha)
\left(G_{j, l}^\alpha-G_{j-1,l}^\alpha\right)
+ \frac{h_j}{2} \sum_{\alpha=1}^Q \omega_\alpha n_h^m(\hat x_j^\alpha) G_{j-1, j}^\alpha.
\]
Note, that $ G_{j, l}^\alpha-G_{j-1, l}^\alpha\geq 0$ and $G_{j-1, j}^\alpha \geq 0$.

\begin{thm}\label{th2.1}
The high order scheme \eqref{fv} preserves the positivity, i.e., assuming the numerical solution 
at time level $t_m$ positive, then $\bar{n}^{m+1}_j >0$ under the CFL condition
\begin{equation}\label{CFL}
\Delta t < \frac{1}{\max_{j,\alpha} \left( (\Gamma_{j,j}^\alpha)_+ + G_{j-1,j}^\alpha 
+ \frac{1}{h_j\bar{n}^m_j}(-B_{a,j})_+ \right)},
\end{equation}
where $(\cdot)_+$ denotes $\max\{\cdot, 0\}$. In particular, if $n_h^m$ is positive at all points 
$\hat x_j^\alpha$, then $\bar{n}^{m+1}_j>0$.
\end{thm}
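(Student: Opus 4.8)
The plan is to substitute the two flux-difference decompositions stated just above the theorem into the update formula \eqref{fv}, and then isolate the single ``death'' contribution whose sign I cannot control while bounding everything else from below. First I would add the aggregation and breakage parts to write the total flux difference as
\[
F^m_{j+1/2}-F^m_{j-1/2} = -B_{a,j} - \sum_{l=j+1}^N \frac{h_l}{2}\sum_\alpha \omega_\alpha n_h^m(\hat x_l^\alpha)\bigl(G_{j,l}^\alpha-G_{j-1,l}^\alpha\bigr) + \frac{h_j}{2}\sum_\alpha \omega_\alpha n_h^m(\hat x_j^\alpha)\bigl(\Gamma_{j,j}^\alpha+G_{j-1,j}^\alpha\bigr),
\]
so that, using $\lambda_j h_j/2=\Delta t/2$, \eqref{fv} becomes $\bar{n}_j^{m+1}=\bar{n}_j^m+\lambda_j B_{a,j}+(\text{breakage birth term})-\frac{\Delta t}{2}\sum_\alpha \omega_\alpha n_h^m(\hat x_j^\alpha)(\Gamma_{j,j}^\alpha+G_{j-1,j}^\alpha)$.

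The breakage birth term is nonnegative, since its factors $n_h^m(\hat x_l^\alpha)$ are positive by hypothesis and $G_{j,l}^\alpha-G_{j-1,l}^\alpha\ge 0$ was noted just before the theorem; hence I would simply discard it to obtain a lower bound. For the aggregation birth term $\lambda_j B_{a,j}$, whose sign is \emph{not} guaranteed once the partial fluxes are replaced by their quadrature approximations, I use $B_{a,j}\ge -(-B_{a,j})_+$. For the death term, positivity of $n_h^m(\hat x_j^\alpha)$ lets me replace each $\Gamma_{j,j}^\alpha$ by its positive part, which yields
\[
\bar{n}_j^{m+1}\ge \bar{n}_j^m - \lambda_j(-B_{a,j})_+ - \frac{\Delta t}{2}\sum_\alpha \omega_\alpha n_h^m(\hat x_j^\alpha)\bigl((\Gamma_{j,j}^\alpha)_+ + G_{j-1,j}^\alpha\bigr).
\]

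The crucial algebraic step is then to factor out $\bar{n}_j^m$. Here I would invoke the quadrature identity \eqref{qq}, which gives $\sum_\alpha \omega_\alpha n_h^m(\hat x_j^\alpha)=2\bar{n}_j^m$; bounding the $\alpha$-sum in the death term by its maximal coefficient and writing $\lambda_j(-B_{a,j})_+=\Delta t\,\bar{n}_j^m\cdot\frac{1}{h_j\bar{n}_j^m}(-B_{a,j})_+$ produces
\[
\bar{n}_j^{m+1}\ge \bar{n}_j^m\Bigl(1-\Delta t\max_\alpha\bigl[(\Gamma_{j,j}^\alpha)_+ + G_{j-1,j}^\alpha+\tfrac{1}{h_j\bar{n}_j^m}(-B_{a,j})_+\bigr]\Bigr).
\]
Because the bracketed quantity is at most the maximum over $j$ and $\alpha$ in \eqref{CFL}, the CFL condition forces the factor in parentheses to be strictly positive, and $\bar{n}_j^m>0$ (itself a consequence of \eqref{qq} together with positivity at the Gauss points) completes the argument. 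This also makes the ``in particular'' clause transparent, since only positivity of $n_h^m$ at the points $\hat x_j^\alpha$ is ever used.

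I expect the main obstacle to be conceptual rather than computational: recognizing that the quadrature approximations $\Gamma_{j,j}^\alpha$ and $B_{a,j}$ need not inherit the nonnegativity of their continuous counterparts, which is exactly why the positive-part functions must appear in \eqref{CFL}, and realizing that the \emph{same} set of Gauss points must serve both for the cell average \eqref{qq} and for the death-term evaluation so that the factorization of $\bar{n}_j^m$ goes through cleanly.
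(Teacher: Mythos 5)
Your proposal is correct and follows essentially the same route as the paper: it substitutes the two flux-difference decompositions given just before the theorem into \eqref{fv}, discards the nonnegative breakage birth term, and uses the quadrature identity \eqref{qq} to express the cell average through the Gauss-point values so that the death and $B_{a,j}$ contributions can be absorbed into a factor of the form $1-\Delta t\,[\cdots]$. Your version is in fact slightly more careful than the paper's (which writes $\Gamma_{j,j}^\alpha$ and $-B_{a,j}$ rather than their positive parts in the final lower bound), but the underlying argument is identical.
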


\begin{proof} We rewrite scheme \eqref{fv}, using (\ref{qq}), as
\begin{align*}
\bar{n}^{m+1}_j
=\ & \frac{1}{2} \sum_{\alpha=1}^Q \omega_{\alpha} n_h^m(\hat{x}^{\alpha}_j) 
- \lambda_j [F_{j+1/2}-F_{j-1/2}] \\
=\ & \frac{1}{2} \sum_{\alpha=1}^Q \omega_{\alpha} n_h^m(\hat{x}^{\alpha}_j) - \lambda_j \left[ 
  \frac{h_j}{2}\sum_{\alpha=1}^Q \omega_{\alpha} n_h^m(\hat{x}^{\alpha}_j)\Gamma_{j,j}^\alpha 
  - \sum_{l=1}^{j-1} \frac{h_l}{2} \sum_{\alpha=1}^Q \omega_{\alpha} n_h^m(\hat{x}^{\alpha}_l)
  \left(\Gamma_{j-1,l}^\alpha - \Gamma_{j,l}^\alpha\right) \right] \\
& - \lambda_j \left[
  \frac{h_j}{2} \sum_{\alpha=1}^Q \omega_\alpha n_h^m(\hat x_j^\alpha) G_{j-1, j}^\alpha
  - \sum_{l=j+1}^{N} \frac{h_l}{2} \sum_{\alpha=1}^Q \omega_\alpha n_h^m(\hat x_l^\alpha)
  \left(G_{j,l}^\alpha-G_{j-1,l}^\alpha\right) \right] \\
\geq\ & \frac{1}{2} \sum_{\alpha=1}^Q \omega_{\alpha} n_h^m(\hat{x}^{\alpha}_j)
  \left[ 1 - \Delta t\,\Gamma_{j,j}^\alpha - \Delta t\,G_{j-1,j}^\alpha 
  - \Delta t\,\frac{1}{h_j\bar{n}^m_j}(-B_{a,j}) \right].
\end{align*}
Therefore, $\bar{n}^{m+1}_j >0$ under the restriction on the time step (\ref{CFL}).
\end{proof}

\begin{rem}
The CFL condition is somewhat inconvenient, but this condition is only sufficient. 
For aggregation, in most practical cases the term $B_{a,j}$ is positive, hence the third term 
of the condition vanishes. For breakage, the condition depends only on the breakage 
kernel and the maximal grid point, what forces the time step to be very small. Fortunately, in 
most cases the time step can be chosen much larger without losing positivity.
\end{rem}

\begin{rem}
Ideally the assumption should be $n_h \geq0$. In fact, the theorem can be relaxed to 
$n_h^m\geq 0$ at all quadrature points $\hat x_j^\alpha$ requiring only strictly positive 
cell averages, $\bar{n}_j^m>0$.
\end{rem}

\subsection{A scaling limiter}
Theorem \ref{th2.1} implies that in order to preserve the solution positivity, we need to enforce 
$n_h^m(\hat x_j^\alpha)\geq 0$. This is achieved by a reconstruction step using cell averages as 
a reference.

Let $n_h\in P^k(I_j)$ be an approximation of a smooth function $n(x)\geq 0$ with the cell average 
$\bar{n}_j$, defined in (\ref{cellav}). Following the idea of scaling limiter by \cite{ZS10}, we 
define the scaled polynomial by
\begin{equation}\label{c5recon}
\tilde{n}_h(x) = \theta \left(n_h(x)-\bar{n}_j\right) + \bar{n}_j, \quad 
\theta = \min\left\{1, \frac{\bar{n}_j}{\bar{n}_j - \min\limits_{x\in I_j}n_h(x)}\right\}.
\end{equation}
It is easy to check that the cell average of $\tilde n_h$ is still $\bar{n}_j$ and 
$\tilde n_h\geq 0$ in $I_j$. Following \cite{ZS10, LY14}, we have the next lemma.

\begin{lem} \label{c5limaccu}
If $\bar n_j >0$, then the modified polynomial is as accurate as $n_h$ in the following sense
\begin{equation}\label{c5ggh}
|\tilde{n}_h(x) - n_h(x)|\leq C_k\|n_h - n\|_{\infty} \qquad \mbox{for all}\ x \in I_j,
\end{equation}
where $C_k$ is a constant depending on the polynomial degree $k$.
\end{lem}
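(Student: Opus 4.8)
The plan is to start from the algebraic identity that follows at once from the definition \eqref{c5recon}: subtracting $n_h$ gives
\[
\tilde{n}_h(x) - n_h(x) = (\theta-1)\bigl(n_h(x) - \bar{n}_j\bigr),
\]
so that $|\tilde{n}_h(x) - n_h(x)| = (1-\theta)\,|n_h(x) - \bar{n}_j|$, and it suffices to bound the right-hand side by $C_k\|n_h - n\|_\infty$ for every $x\in I_j$. If $\theta = 1$ the left-hand side vanishes and there is nothing to prove, so I would assume the limiter is active, i.e. $m := \min_{x\in I_j} n_h(x) < 0$ and $\theta = \bar{n}_j/(\bar{n}_j - m)<1$, which is where the hypothesis $\bar{n}_j>0$ is used. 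A one-line computation then yields the clean expression $1-\theta = -m/(\bar{n}_j - m)$.

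Next I would bring in the two available facts: that $n\geq 0$ and that the error $\|n_h-n\|_\infty$ is small. Evaluating at the minimizer $x^*$ of $n_h$ and using $n(x^*)\geq 0$,
\[
m = n_h(x^*) = n(x^*) + \bigl(n_h(x^*)-n(x^*)\bigr) \geq -\|n_h-n\|_\infty,
\]
so $-m\leq\|n_h-n\|_\infty$; the numerator of $1-\theta$ is thus already controlled by the error. It remains to match $|n_h(x)-\bar{n}_j|$ against the denominator $\bar{n}_j - m$. Here I would introduce the shifted polynomial $q:=n_h-m$, which is non-negative on $I_j$, of degree at most $k$, with cell average $\bar{q}=\bar{n}_j-m>0$. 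Writing $n_h(x)-\bar{n}_j = q(x)-\bar{q}$ and invoking the key inverse-type estimate $\|q\|_{L^\infty(I_j)}\leq C_k\,\bar{q}$ (discussed below), the triangle inequality gives $|n_h(x)-\bar{n}_j|\leq(C_k+1)(\bar{n}_j-m)$ uniformly in $x$. Multiplying by $1-\theta$ the factor $\bar{n}_j - m$ cancels exactly, leaving
\[
|\tilde{n}_h(x) - n_h(x)| = (1-\theta)\,|n_h(x)-\bar{n}_j| \leq (C_k+1)(-m) \leq (C_k+1)\,\|n_h-n\|_\infty,
\]
which is \eqref{c5ggh} after relabeling the constant.

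The one ingredient that carries the whole argument, and the step I expect to be the main obstacle, is the estimate $\|q\|_{L^\infty(I_j)}\leq C_k\,\bar{q}$ for non-negative $q\in P^k(I_j)$, with $C_k$ depending only on $k$ and \emph{not} on the cell length $h_j$. Following \cite{ZS10, LY14}, I would prove it by pulling back to the reference interval $[-1,1]$ via the affine change of variables: since both the $L^\infty$ norm and the cell average are scale invariant, it is enough to show $\|\hat q\|_{L^\infty([-1,1])}\leq C_k\int_{-1}^1\hat q\,d\xi$ for the pulled-back polynomial $\hat q$. This is exactly where finite-dimensionality and non-negativity enter: equivalence of all norms on the finite-dimensional space $P^k$ gives $\|\hat q\|_{L^\infty}\leq c_k\|\hat q\|_{L^1}$, and for a non-negative polynomial $\|\hat q\|_{L^1([-1,1])}=\int_{-1}^1\hat q\,d\xi = 2\bar q$. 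The point that genuinely requires care is this scale invariance of $C_k$, since the potentially dangerous factor $1/h_j$ from the inverse inequality must cancel against the $h_j$ hidden in the relation between the $L^1$ norm and the average; otherwise the constant would degrade under mesh refinement. Everything else in the proof is elementary cancellation.
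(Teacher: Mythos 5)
Your proof is correct. Note that the paper itself gives no proof of this lemma --- it is stated with a pointer to \cite{ZS10, LY14} --- and your argument is essentially the standard one from those references: the identity $\tilde n_h - n_h = (\theta-1)(n_h-\bar n_j)$, the bound $-\min_{I_j} n_h \le \|n_h-n\|_\infty$ coming from $n\ge 0$, and the scale-invariant inverse estimate $\|q\|_{L^\infty(I_j)}\le C_k\,\bar q$ for non-negative $q\in P^k(I_j)$, which you correctly identify as the crux and justify by norm equivalence on the reference cell so that the constant is independent of $h_j$.
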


\begin{rem}
Since we only need control of the values at the Gaussian quadrature points 
\[
S_j=\{\hat x_j^\alpha, \alpha=1, \dots, Q\}
\]
we could replace (\ref{c5recon}) by
\begin{equation}\label{c5zetaS}
\theta = \min\left\{1, \frac{\bar{n}_j}{\bar{n}_j - \min\limits_{x\in S_j}n_h(x)}\right\}
\end{equation}
and the limiter (\ref{c5recon}) with (\ref{c5zetaS}) is sufficient to ensure
\[
\tilde{n}_h(x) \geq 0 \qquad \mbox{for all}\ x\in S_j.
\]
Furthermore, Lemma \ref{c5limaccu} remains valid with this less restrictive limiter, i.e., we have
\[
|\tilde{n}_h(x) - n_h(x)|\leq C_k\|n_h - n\|_{\infty} \qquad \mbox{for all}\ x \in I_j,
\]
where $C_k$ is a constant depending on $k$.
\end{rem}

\begin{rem} 
Lemma \ref{c5limaccu} establishes that the reconstructed polynomial is as accurate as the 
original polynomial. Our numerical results based on this reconstruction are excellent. 
It would be interesting to analyze how the reconstruction error will accumulate in time.
\end{rem} 

\section{Implementation details}\label{IMP}

\subsection{Matrix formulation and implementation}
We would like to introduce the matrix formulation of our numerical scheme and outline the 
flowchart of the algorithm.

As a basis for the set of test functions we choose the Legendre polynomials, 
denoted by $\phi_i(\xi)$,
\[
\phi_0(\xi)=1,\ \ \phi_1(\xi)=\xi,\ \ \phi_2(\xi)=\tfrac{1}{2}\left(3\xi^2-1\right),\dots \quad 
\phi:=(\phi_0, \dots, \phi_k)^T,
\]
which are orthogonal in $L^2([-1,1])$. On each cell, the unknown function can be represented as 
\[
n_h(t,x)=\sum_{i=0}^k n_j^i(t)\phi_i(\xi^j(x)), \quad x\in I_j.
\]
Here $\xi^j(x)$ is the mapping from $I_j$ to $[-1, 1]$, $\xi^j(x):=\frac{2}{h_j}(x-x_j)$. 
To determine $n_h$, it suffices to identify the coefficients $n_j=(n_j^0, \dots, n_j^k)^T$. 
Due to the orthogonality of the basis functions the mass matrix becomes diagonal,
\begin{equation}
\int_{I_j}\partial_t n_h \phi\,dx 
= \frac{h_j}{2}\int_{-1}^1 \phi(\xi)\phi^T(\xi)d\xi\ \frac{d}{dt} n_j(t)
= \frac{h_j}{2}\,\mathrm{diag}\{c_0, \cdots, c_k\}\,\frac{d}{dt} n_j(t),
\end{equation}
where $c_i=\frac{2}{2i+1}$ are the normalization constants.

The fact that we only require $n_h^m$ be nonnegative at certain points can reduce the 
computational cost considerably. Instead of finding the minimum of $n_h$ on the whole 
computational cell $I_j$, we take the minimum only on the test set $S_j$.

\subsection{Algorithm flowchart}
For simplicity, we only consider the Euler forward time stepping. The algorithm with the strong 
stability preserving (SSP) Runge-Kutta method  \cite{GST01} for higher order time discretization can be 
implemented by repeating the following flowchart in each stage; since each SSP-RK method is a 
convex linear combination of the forward Euler. The desired positivity preserving property is 
ensured under a suitable CFL condition.

\begin{enumerate}
\item Initialization: From the given initial data $n_0(x)$

(i) generate $n_h^0 \in V_h^k$ by piecewise $L^2$ projection,

(ii) reconstruct $n_h^0$ as in step 3.

\item Evolution: Use the scheme (\ref{fddg}) to compute $n_h^{m+1}$.

(i) If $\bar n_j^{m+1}$ is positive for all $j$, set $n_h^m = n_h^{m+1}$, continue with step 3.

(ii) Otherwise, halve the time step $\Delta t$ and restart step 2.

\item Reconstruction: Use (\ref{c5recon}) with (\ref{c5zetaS}) and set $n_h^m = \tilde{n}_h^m$, 
  continue with step 2.
\end{enumerate}

\section{Numerical results}\label{NUM}
In this section, we give numerical tests for the proposed positivity-preserving DG scheme 
applied to pure aggregation, breakage and also for the combined processes considering several 
test problems. The following test cases are chosen similar to those in \cite{KKW14}. 
We compare our results with some standard numerical methods, the Cell Average Technique (CAT) 
by J. Kumar \cite{Ku06} and the Finite Volume Scheme (FVS) by Filbet and Lauren\c{c}ot \cite{FL04} 
which is a special case of the presented DG scheme for $k=0$. 
The error calculation for the Cell Average Technique is based on the number density, $f(t,x)$, 
whereas all other errors are calculated with respect to the mass density, $n(t,x)=xf(t,x)$.

For this type of equations it is convenient to use a geometric grid, $x_{j+1/2}=rx_{j-1/2}$. 
We choose the factor $r=2^{(30/N)}$ to span about 9 orders of magnitude. We set $x_{1/2}=0$ 
and $x_{3/2}=x_0$ where $x_0$ can be adapted to the different cases.

All numerical simulations below were carried out to investigate the experimental order of 
convergence (EOC). The error is measured in a continuous and a discrete norm. 
The continuous $L^1$ norm can be approximated by
\begin{align}
e_h = \sum_j \frac{h_j}{2} \sum_{\alpha=1}^R \tilde\omega_\alpha 
|n_h(t,\tilde x_j^\alpha)-n(t,\tilde x_j^\alpha)|,
\end{align}
using a high-order Gaussian quadrature rule, where $\tilde\omega_\alpha>0$ are the weights, and 
$\tilde x_j^\alpha$ are the corresponding Gauss points in each cell $I_j$. 
We choose $R=16$ through all the examples. 
The symbol $h$ corresponds to the number of cells.
The discrete $L^1$ norm is given by
\begin{align}
e_{h,d} = \sum_j \frac{h_j}{2} \sum_{\alpha=1}^Q \omega_\alpha 
|n_h(t,\hat x_j^\alpha)-n(t,\hat x_j^\alpha)|,
\end{align}
where the Gaussian quadrature points are the same as used for the discretization of the scheme.
We have chosen the $L^1$ norm since it is a natural choice for conservation laws. Using the $L^2$ 
or $L^\infty$ norm, similar numerical results and the same order of convergence can be obtained. 
The error calculation in the discrete norm coincides for $k=0$ with that used by Filbet and 
Lauren\c{c}ot \cite{FL04} for the Finite Volume Scheme.

If the problem has analytical solutions, the following formula is used to calculate the EOC
\[
\mathrm{EOC} = \ln(e_h /e_{h/2} )/ \ln(2).
\]
In case of unavailability of the analytical solutions, the EOC can be computed using 
$e_h=\|n_{h}-n_{h/2}\|$ where $n_{h/2}$ is interpolated onto the grid of $n_h$ by a high order 
interpolation method.
For the calculation of the EOC we show the numerical errors at time $t=0.01$ since the order of 
convergence of the DG scheme after longer times is disturbed by the low order time integration.

We shall also evaluate the moments of the numerical solution by 
\[
M_{p,h} = \int_0^L x^{p-1} n_h(t,x)dx
= \sum_j \frac{h_j}{2} \sum_{\alpha=1}^R \tilde\omega_\alpha (\tilde x_j^\alpha)^{p-1} 
n_h(t,\tilde x_j^\alpha), \quad p=0, 1, 2, \dots,
\]
where $R$ is chosen large enough such that the integration is exact for all moments under 
consideration. The error in the moments is then given by
\[
e(M_{p,h}) = \frac{|M_{p,h}-M_p|}{M_p}, \quad p=0, 1, 2, \dots
\]
where the error in the first moment $e(M_{1,h})$ of course vanishes by construction of the method.

\subsection{Pure aggregation} {\bf Test case 1:} \\
The numerical verification of the EOC of the DG solutions for aggregation is discussed by taking three 
problems, namely the case of constant, sum and product aggregation kernels. The analytical 
solutions for these problems taking the exponential initial distribution $n(0, x)=x\exp(-x)$ 
have been given in Scott \cite{Sc68}.
The computational domain in these cases is taken as $[10^{-3}, 10^6]$.

\begin{figure}[!ht]
\includegraphics[width=7.5cm]{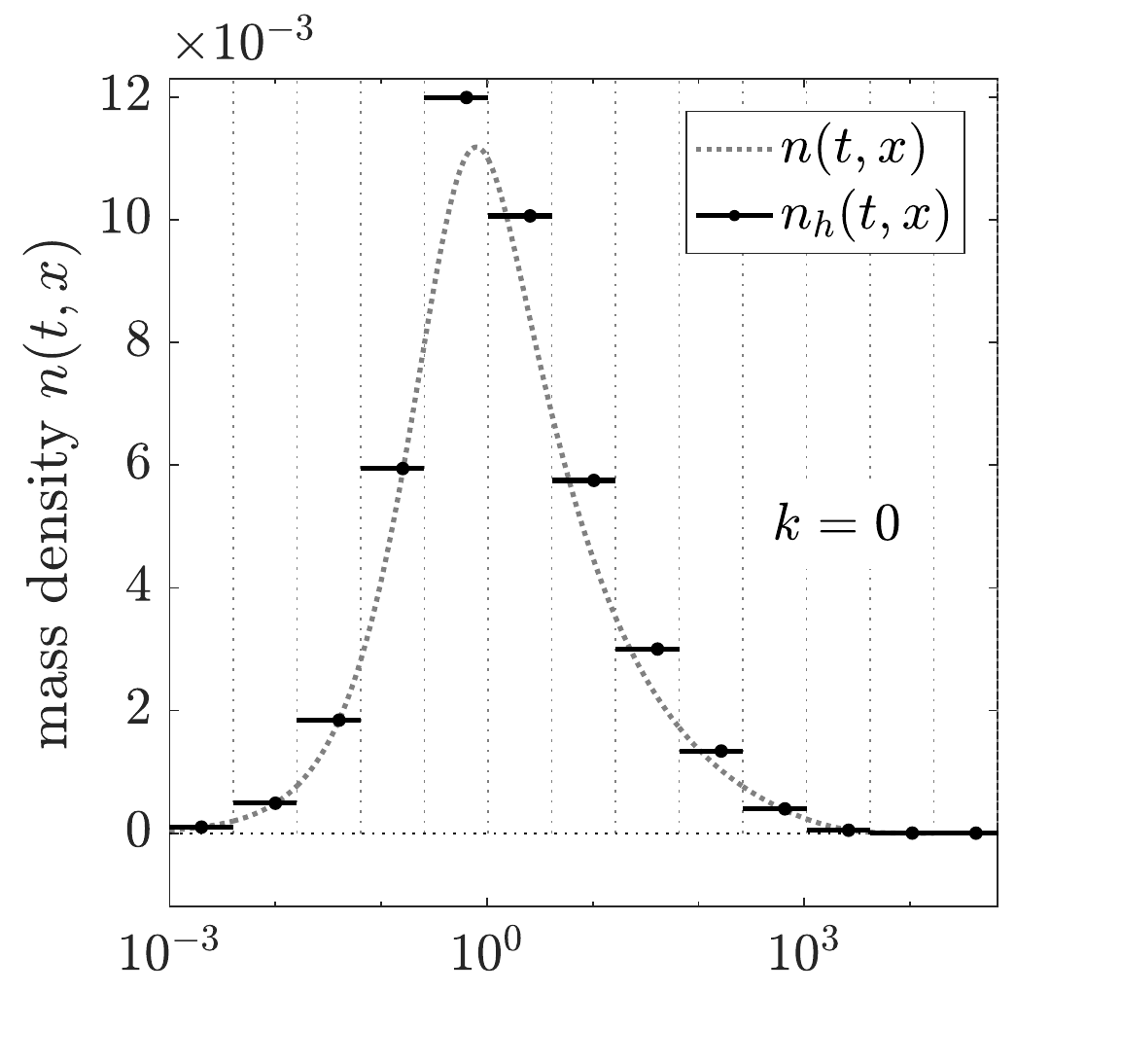} \hspace{-10mm}
\includegraphics[width=7.5cm]{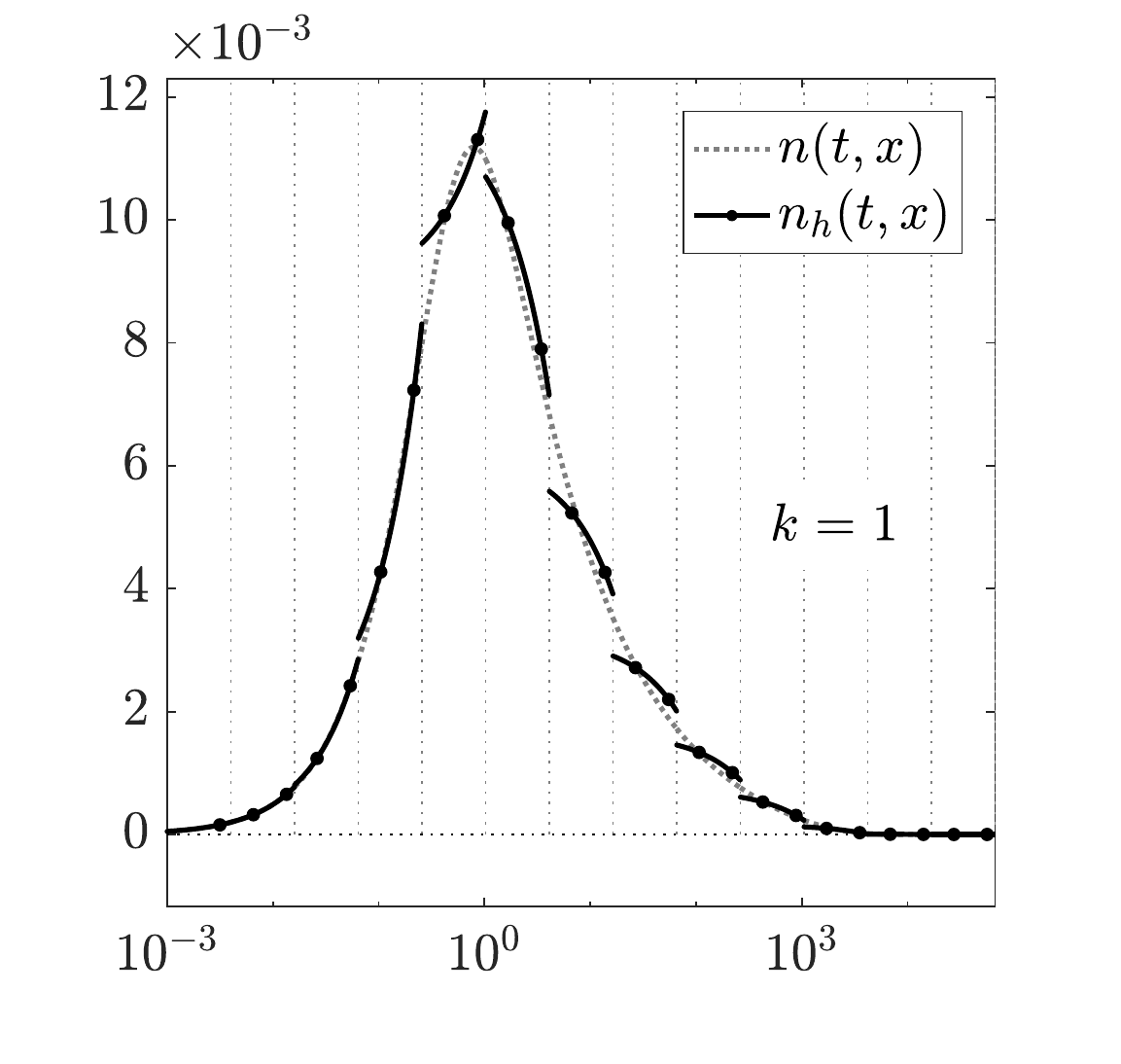} \\[-12pt]
\includegraphics[width=7.5cm]{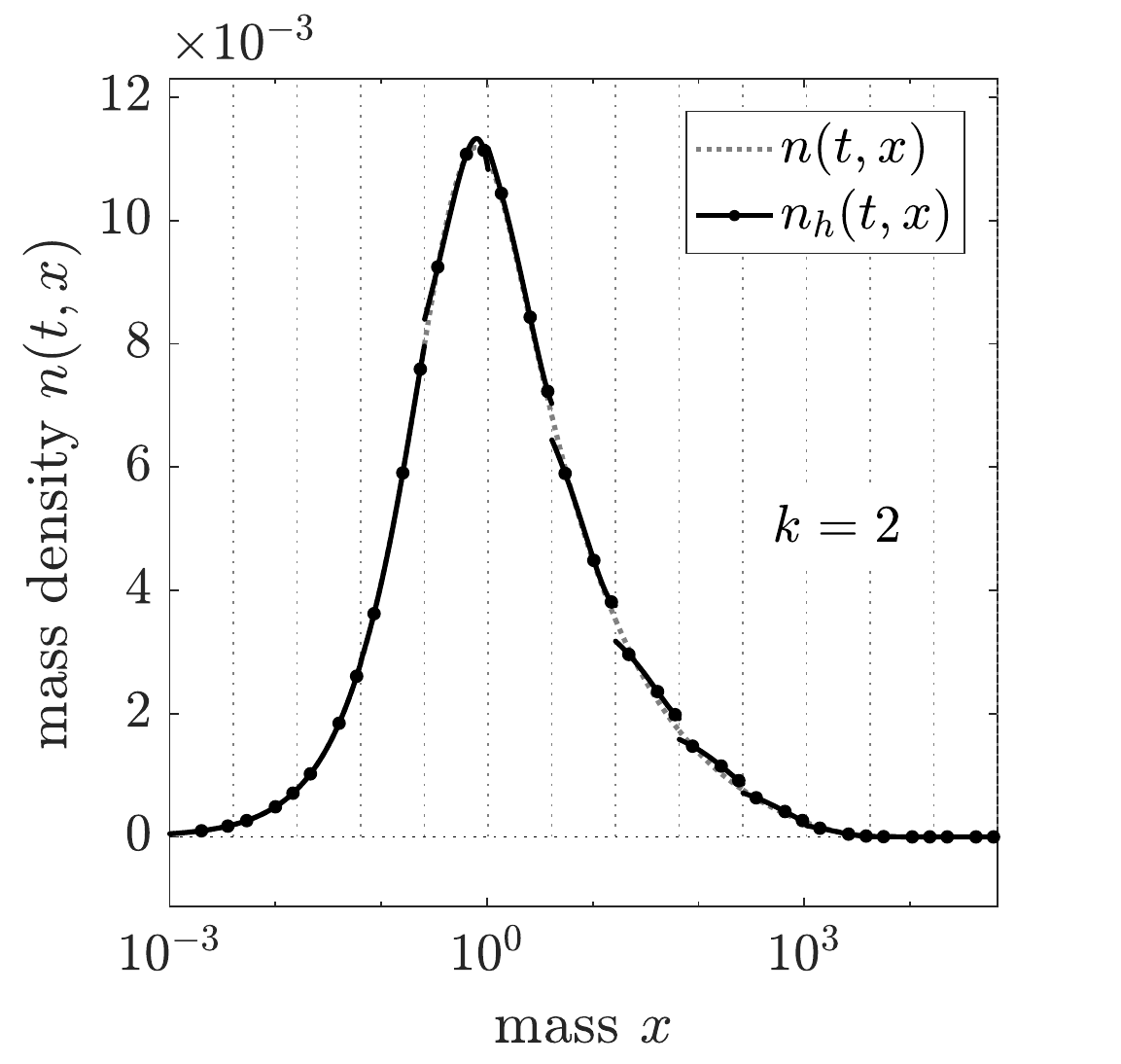} \hspace{-10mm}
\includegraphics[width=7.5cm]{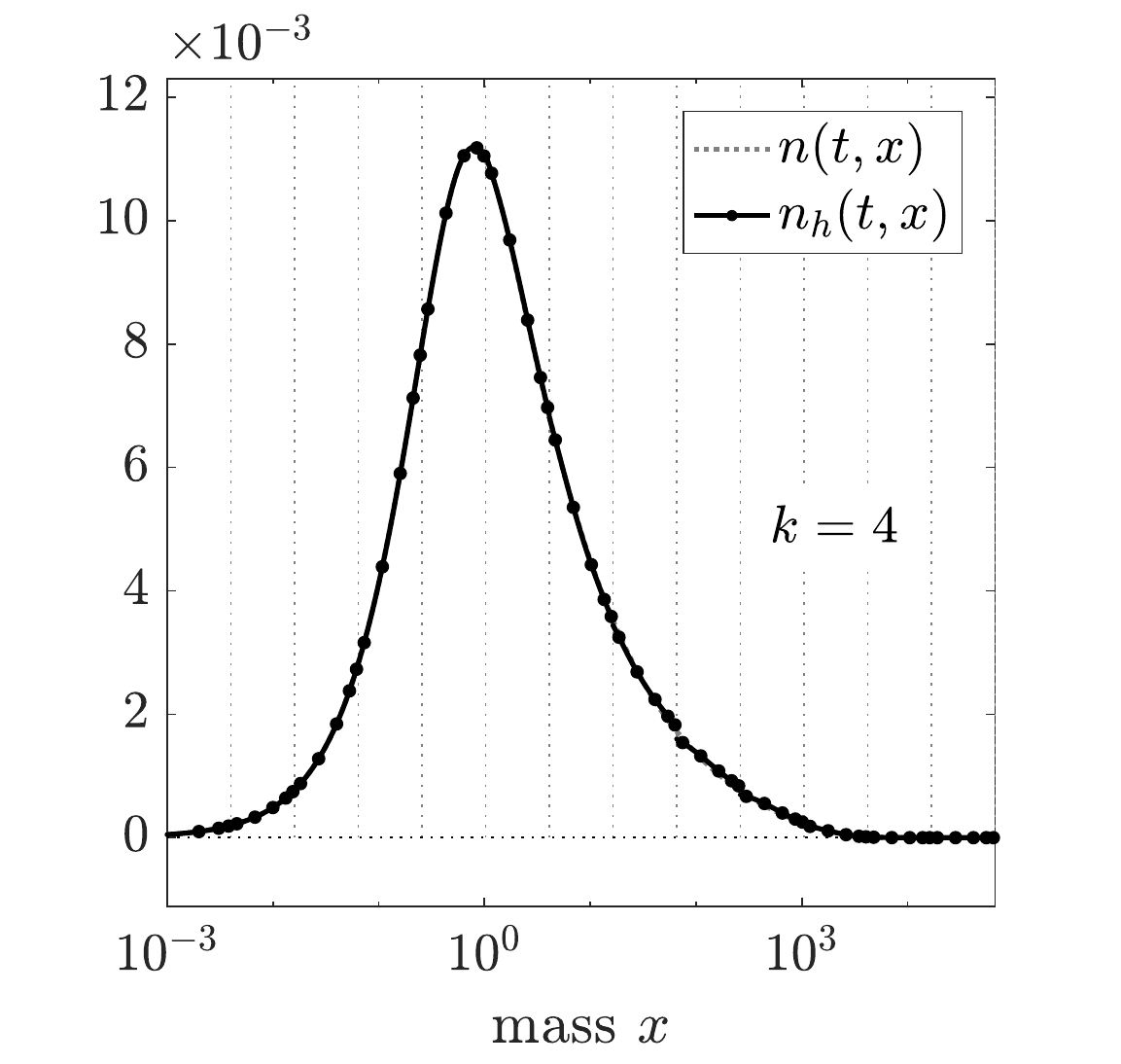}
\caption{Numerical solution for $N=15$ cells and different $k$ for test case 1}
\label{num_sol}
\end{figure}

The numerical solutions for the sum aggregation kernel with $N=15$ cells and different polynomial 
degree $k$ are shown in Figure \ref{num_sol}. The numerical solution is shown together with the 
analytical one at time $t=3$ where the degree of aggregation is 
$I_{\text{agg}}=1-M_0(t)/M_0(0)\approx 95\%$. Due to the logarithmic scale of the $x$-axis the 
piecewise linear solution for $k=1$ appears curved and also the Gauss points are of course 
symmetrically distributed in one cell. Obviously, the approximation improves for a higher 
polynomial degree.

\begin{table}[ht]
\begin{tabular}{ccccccc}
 \toprule
  $k\backslash N$ &   15   &   30    &   60    &  120    &  240    & EOC \\ \hline
        0 (FVS)   & 4.2e-1 & 2.1e-1  & 1.0e-1  & 5.2e-2  & 2.6e-2  & 1.0 \\
        1         & 1.3e-1 & 4.4e-2  & 1.1e-2  & 2.8e-3  & 6.9e-4  & 2.0 \\
        2         & 7.4e-2 & 8.0e-3  & 1.1e-3  & 1.4e-4  & 1.7e-5  & 3.0 \\
        4         & 1.3e-2 & 3.0e-4  & 1.0e-5  & 3.3e-7  & 1.0e-8  & 5.0 \\
        8         & 3.6e-5 & 3.7e-7  & 7.0e-10 & 1.4e-12 & 1.2e-14 & 9.0 \\ \hline
        CAT       & 5.3e-1 & 2.3e-1  & 1.1e-1  & 5.3e-2  & 2.6e-2  & 1.0  \\
 \bottomrule
\end{tabular} \vspace{2pt}
\caption{$L^1$ errors $e_h$ and EOC for test case 1}
\label{errors_test1}
\end{table}

\begin{table}[!ht]
\begin{tabular}{ccccccc}
 \toprule
  $k\backslash N$ &   15   &   30    &   60    &  120    &  240    & EOC \\ \hline
        0 (FVS)   & 1.3e-1 & 5.5e-2  & 1.4e-2  & 3.5e-3  & 8.8e-4  & 2.0 \\
        1         & 8.7e-2 & 9.0e-3  & 1.2e-3  & 1.5e-4  & 1.8e-5  & 3.0 \\
        2         & 3.8e-2 & 1.9e-3  & 1.1e-4  & 6.8e-6  & 4.3e-7  & 4.0 \\
        4         & 3.6e-3 & 5.2e-5  & 9.4e-7  & 1.5e-8  & 2.3e-10 & 6.0 \\
        8         & 2.9e-5 & 4.7e-8  & 6.0e-11 & 6.6e-14 & 2.0e-14 & 10.0 \\ \hline
        CAT       & 1.3e-1 & 3.7e-2  & 9.6e-3  & 2.4e-3  & 6.1e-4  & 2.0  \\
 \bottomrule
\end{tabular} \vspace{2pt}
\caption{Discrete $L^1$ errors $e_{h,d}$ and EOC for test case 1}
\label{errors_d_test1}
\end{table}

The numerical errors for the sum aggregation kernel are shown in Table \ref{errors_test1} and 
Table \ref{errors_d_test1}. In the continuous $L^1$ norm the EOC is $k+1$ as expected. 
In the discrete $L^1$ norm the EOC is $k+2$ on a geometric grid which is one order higher than 
in the continuous norm. It appears that the evaluation of the numerical solution at the same 
Gauss points as used for the discretization of the scheme shows a type of superconvergence. 
For $k=0$, the second order convergence for the Finite Volume Scheme on smooth grids was 
proven in \cite{KKW14}.

The numerical results for the constant and product aggregation kernels are very similar and 
are not shown again. In our tests we observe that even the post-gelation phase, see e.g.
\cite{EZH84}, for the product aggregation kernel can be simulated very well.

\begin{figure}[!ht]
\includegraphics[width=7.5cm]{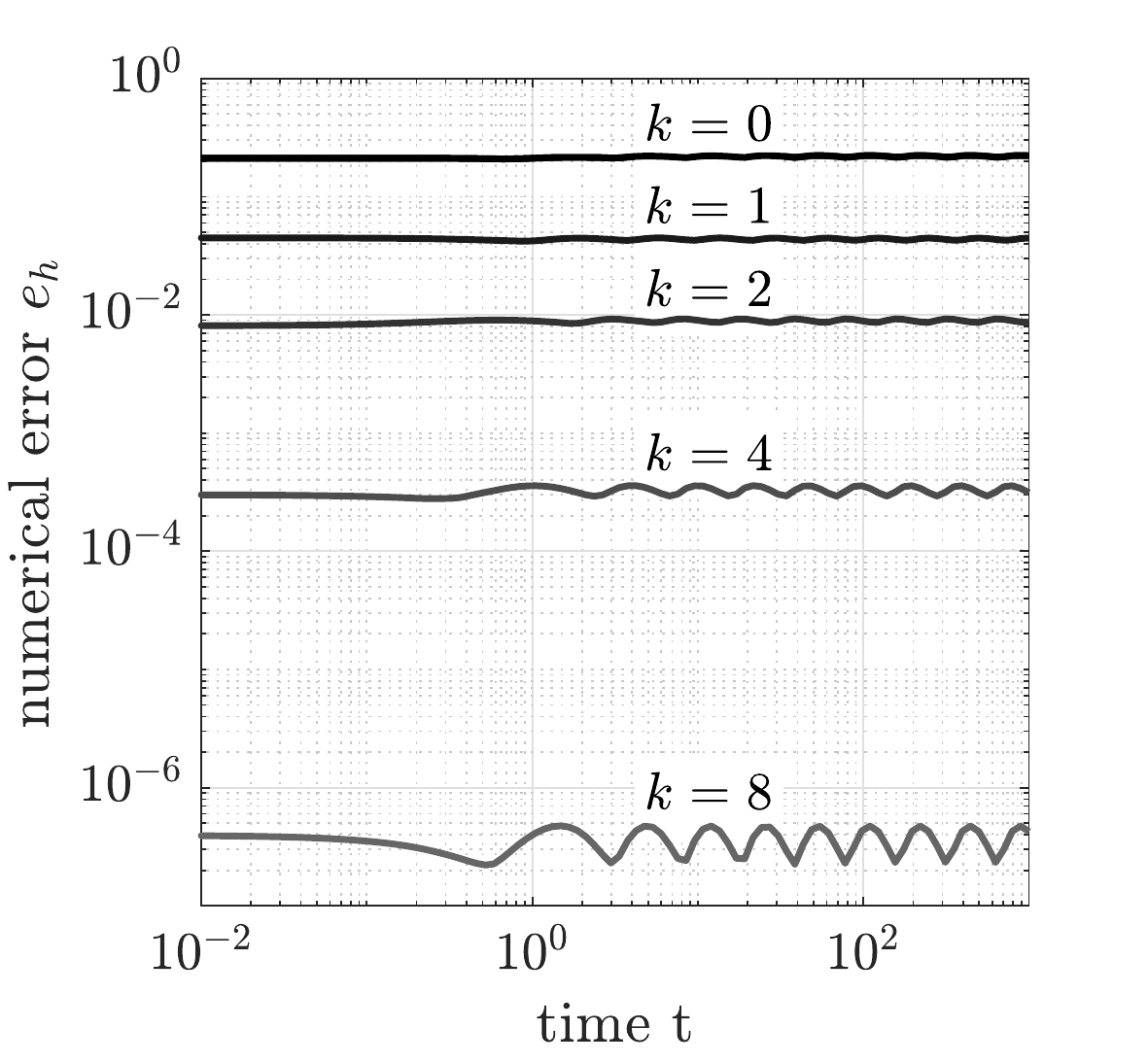} \hspace{-4mm}
\includegraphics[width=7.5cm]{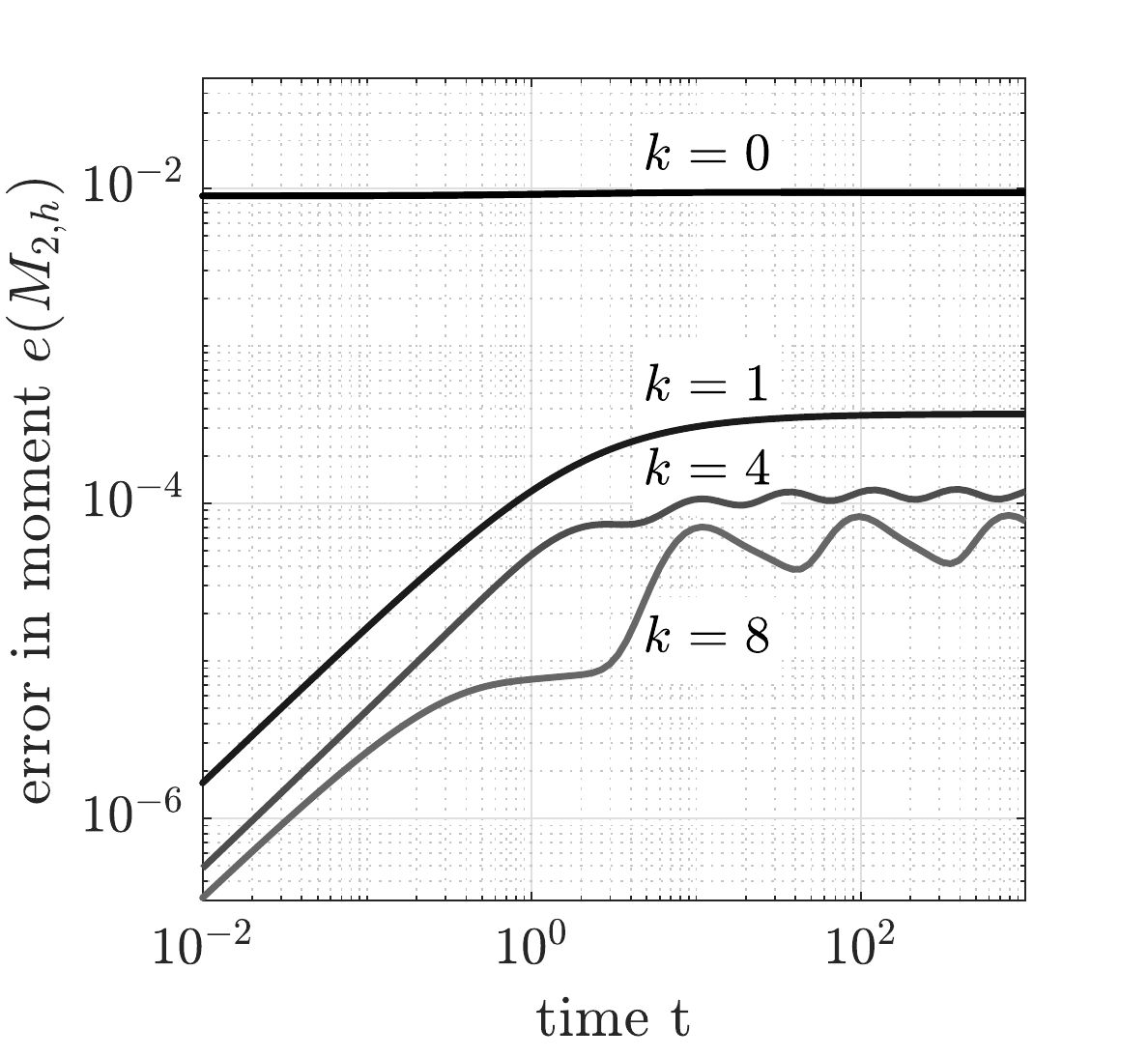}
\caption{Error evolution of the numerical solution (left) and of the second moment (right) 
for different $k$ for test case 1}
\label{error_evo}
\end{figure}

The evolution of the numerical error in time is shown for the constant aggregation kernel in 
Figure \ref{error_evo} (left). We show the results for $N=30$ cells and varying polynomial 
degree $k$ up to time $t=1000$ where the degree of aggregation is $I_{\text{agg}}\approx 99.8\%$. 
One can observe that the numerical error remains bounded for longer times.

\begin{table}[!ht]
\begin{tabular}{ccccccccc}
 \toprule
 $(N,k)$ & $e(M_{0,h})$ & $e(M_{1,h})$ & $e(M_{2,h})$ & $e(M_{3,h})$ & $e(M_{4,h})$ & $e(M_{5,h})$
 \\ \hline
 FVS (90, 0) &  6.6e-3  &    0     &  9.4e-3  &  3.7e-2  &  8.6e-2  &  1.6e-1  \\
     (45, 1) &  2.8e-4  &    0     &  3.7e-4  &  3.8e-3  &  1.6e-2  &  4.5e-2  \\
     (30, 2) &  6.7e-5  &    0     &  6.2e-4  &  2.2e-3  &  3.6e-3  &  1.5e-3  \\
     (18, 4) &  9.1e-6  &    0     &  1.2e-4  &  6.1e-4  &  3.1e-3  &  1.6e-2  \\
     (10, 8) &  5.1e-6  &    0     &  7.7e-5  &  7.6e-4  &  8.2e-3  &  4.6e-2  \\ \hline
 CAT (90)    &  3.7e-14 &    0     &  1.1e-2  &  2.9e-2  &  5.1e-2  &  7.3e-2  \\
 \bottomrule
\end{tabular} \vspace{2pt}
\caption{Numerical errors in the first 6 moments for test case 1}
\label{moments}
\end{table}

In this test case, we also discuss the approximation of the moments. The error in the first 
6 moments $M_0,\dots,M_5$ for the constant aggregation kernel at time $t=1000$ are shown in 
Table \ref{moments}. For a better comparison we have chosen the same number of evaluation points 
$N(k+1)=90$. One can see that the prediction of the zeroth moment is very accurate and even the 
higher moments are approximated well. The lower moments are predicted better for an increasing 
polynomial degree $k$. The accuracy in the higher moments for a larger polynomial degree $k$ is 
disturbed by some small oscillations of the piecewise high order polynomial for large values of 
$x$. For the Cell Average Technique (CAT) we have to use the normalized moments 
$\tilde M_{p,h}(t)=M_{p,h}(t)/M_{p,h}(0)$ for the error calculation of the moments, 
$e(\tilde M_{p,h})$, since the discretization is focused on the number density and even the 
first moment has a discretization error in the initial distribution. The first two moments are 
preserved exactly by construction of the method and it produces similar errors as the 
Finite Volume Scheme for the higher moments.

In Figure \ref{error_evo} (right) we have shown the evolution of the error in the second moment 
$e(M_{2,h})$ for different $k$ again with the same number of evaluation points $N(k+1)=90$ 
in time. One can observe that the error in the moments increases initially but remains bounded 
for longer times.

\subsection{Pure breakage} {\bf Test case 2:} \\
Here, the EOC is calculated for the binary breakage $b(x, y) = 2/y$ together with the linear and 
quadratic selection functions, i.e. $S(x) = x$ and $S(x) = x^2$. The analytical solutions for 
such problems have been given by Ziff and McGrady \cite{ZM85} for an exponential initial 
condition, $n(0, x) = x\exp(-x)$. The computational domain in these cases is taken as 
$[10^{-6}, 10^3]$.

\begin{table}[ht]
\begin{tabular}{ccccccc}
 \toprule
  $k\backslash N$ &   15   &   30    &   60    &  120    &  240    & EOC \\ \hline
        0 (FVS)   & 4.2e-1 & 2.1e-1  & 1.0e-1  & 5.2e-2  & 2.6e-2  & 1.0 \\
        1         & 1.3e-1 & 4.5e-2  & 1.1e-2  & 2.8e-3  & 6.9e-4  & 2.0 \\
        2         & 7.0e-2 & 8.0e-3  & 1.1e-3  & 1.4e-4  & 1.7e-5  & 3.0 \\
        4         & 1.3e-2 & 3.1e-4  & 1.0e-5  & 3.3e-7  & 1.0e-8  & 5.0 \\
        8         & 2.5e-5 & 4.2e-7  & 7.4e-10 & 1.4e-12 & 1.2e-14 & 9.0 \\ \hline
        CAT       & 5.3e-1 & 2.3e-1  & 1.1e-1  & 5.3e-2  & 2.6e-2  & 1.0  \\
 \bottomrule
\end{tabular} \vspace{2pt}
\caption{$L^1$ errors $e_h$ and EOC for test case 2}
\label{errors_test2}
\end{table}

The results for the linear selection function are shown in Table \ref{errors_test2}.
Hence, we observe that the DG scheme is $k+1$ order convergent in the $L^1$ norm. 
Similar to the previous case, the order of convergence is $k+2$ using the discrete $L^1$ norm 
which is not shown again. For $k=0$, the second order convergence was proven in \cite{KKW14} 
to be mesh-independent for breakage.

The results for the quadratic selection function are very similar and are omitted. Also the 
behavior of the error evolution and the error in the moments is similar and is not shown again. 
\\[14pt]
{\bf Test case 3:}
Now the case of multiple breakage with the quadratic selection function $S(x) = x^2$ is 
considered where an analytical solution is not known. For the numerical simulations, the 
following normal distribution as an initial condition is taken 
\[
n(0,x)=\frac{1}{\sigma\sqrt{2\pi}}\exp \left(-\frac{(x-\mu)^2}{2\sigma^2}\right) .
\]
The computations are made for the breakage function considered by Hill and Ng \cite{HN96}
\[
b(x,y) = p \left( \frac{[m+(m+1)(p-1)]!}{m![m+(m+1)(p-2)]!} \right) 
  \frac{x^m(y-x)^{m+(m+1)(p-2)}}{y^{pm+p-1}}, \quad p\in \mathbb{N},\ p\geq 2,
\]
where the relation $\int_0^y b(x,y)dx = p$ holds where $p$ gives the total number of fragments 
per breakage event. The parameter $m\geq 0$ is responsible for the shape of the daughter 
particle distribution. The numerical solutions are obtained using $p = 4$, $m = 2$.

\begin{table}[ht]
\begin{tabular}{ccccccc}
 \toprule
  $k\backslash N$ &   15   &   30    &   60    &  120    &  240    & EOC \\ \hline
        0 (FVS)   & 5.5e-1 & 2.5e-1  & 1.1e-1  & 6.6e-2  & 3.3e-2  & 1.0 \\
        1         & 1.7e-1 & 5.5e-2  & 1.4e-2  & 3.5e-3  & 8.8e-4  & 2.0 \\
        2         & 3.0e-2 & 1.3e-2  & 1.6e-3  & 2.0e-4  & 2.6e-5  & 3.0 \\
        4         & 1.4e-2 & 5.8e-4  & 1.6e-5  & 5.6e-7  & 1.7e-8  & 5.0 \\
        8         & 4.0e-4 & 1.5e-7  & 1.6e-9  & 3.1e-12 & 8.2e-14 & 9.0 \\ \hline
        CAT       & 3.0e-1 & 1.1e-1  & 4.6e-2  & 2.1e-2  & 1.0e-2  & 1.0  \\
 \bottomrule
\end{tabular} \vspace{2pt}
\caption{$L^1$ errors $e_h$ and EOC for test case 3}
\label{errors_test3}
\end{table}

For the numerical simulation the computational domain is taken as $[10^{-6}, 10^3]$.
In this case, the numerical error $e_h$ is computed from the numerical solutions $n_h$ and 
$n_{h/2}$, as mentioned above. As expected, we again observe from Table \ref{errors_test3} 
that the DG scheme shows convergence of order $k+1$. Using the discrete $L^1$ norm we again 
obtain the higher convergence order $k+2$.

\subsection{Coupled aggregation-breakage} {\bf Test case 4:} \\
Finally, the EOC is evaluated for the simultaneous process with a constant aggregation kernel 
$K(x,y) = 1$ and breakage kinetics $b(x,y) = 2/y$, $S(x) = x/2$.
For the simulation the computational domain $[10^{-3}, 10^6]$ is taken. 
The analytical solutions for this problem are given by Lage \cite{La02} for the following two 
different initial conditions
\begin{align*}
& n(0,x) = x\mathrm{e}^{-x}, \\
& n(0,x) = 4x^2\mathrm{e}^{-2x}.
\end{align*}
The former exponential initial condition is a steady state solution. The latter Gaussian-like 
initial condition is a special case where the number of particles stays constant. From Table 
\ref{errors_test4}, we find that the DG scheme is $k+1$ order convergent where the results for 
the first case are shown. As before, we obtain one order higher convergence on a geometric grid 
in the discrete norm. The second case is very similar and is omitted.

\begin{table}[ht]
\begin{tabular}{ccccccc}
 \toprule
  $k\backslash N$ &   15   &   30    &   60    &  120    &  240    & EOC \\ \hline
        0 (FVS)   & 4.2e-1 & 2.1e-1  & 1.1e-1  & 5.2e-2  & 2.6e-2  & 1.0 \\
        1         & 1.3e-1 & 4.5e-2  & 1.1e-2  & 2.8e-3  & 6.9e-4  & 2.0 \\
        2         & 7.4e-2 & 8.1e-3  & 1.1e-3  & 1.4e-4  & 1.7e-5  & 3.0 \\
        4         & 1.3e-2 & 3.0e-4  & 1.0e-5  & 3.3e-7  & 1.0e-8  & 5.0 \\
        8         & 3.9e-5 & 4.0e-7  & 7.3e-10 & 1.4e-12 & 4.2e-15 & 9.0 \\ \hline
        CAT       & 5.3e-1 & 2.3e-1  & 1.1e-1  & 5.3e-2  & 2.6e-2  & 1.0  \\
 \bottomrule
\end{tabular} \vspace{2pt}
\caption{$L^1$ errors $e_h$ and EOC for test case 4}
\label{errors_test4}
\end{table}

\section{Concluding remarks}\label{CONC}
In this paper, we have developed a high order DG scheme which can be proven to be 
positivity-preserving for both coagulation and fragmentation equations.
We have tested the DG scheme and clearly observed the convergence order $k+1$ and the strict 
positivity preservation in all these tests. 
Interestingly, the DG scheme shows a type of superconvergence of order $k+2$ on a geometric 
grid in the discrete norm which evaluates the numerical solution at the same Gaussian 
quadrature points as used for the discretization of the scheme.
Even though the CFL condition derived to preserve positivity might be very small in some
cases, we emphasize that it is not a necessary condition and can be relaxed significantly.
The high accuracy was verified numerically by taking various examples of pure aggregation, 
pure breakage and the combined problems.

By applying the limiter or the simplified version which avoids the evaluation of extrema of 
polynomials, to a discontinuous Galerkin scheme solving one dimensional 
coagulation-fragmen\-tation equations, with the time evolution by a SSP Runge-Kutta method,
we obtain a high order accurate scheme with strict positivity-preservation.

\subsection*{Acknowledgements} 
Liu's research was partially supported by the National Science Foundation under grant 
DMS-1312636, and DMS-1107291. Gr\"opler's research was partially supported by the 
German Research Foundation DFG, Research Training Group GRK 1554.

\section*{Appendix}\label{Appendix}
In Section \ref{subsec:flux} we have shown the flux evaluation for the boundary term in the 
DG scheme \eqref{dg}. For the numerical approximation of the second term we apply Gaussian 
quadrature, as already shown in (\ref{intF}). The approximation of the flux at the Gauss points, 
$F(t,\hat x_{j}^\gamma)$, is shown in the following. For aggregation the flux 
$F_a(t,\hat x_{j}^\gamma)$ can be approximated by the numerical flux
\begin{equation}
F_{a,j}^\gamma = \sum_{l=1}^{j} \frac{b_l-a_l}{2}\sum_{\alpha=1}^Q \omega_\alpha 
n_h(t,u_{l}^\alpha) \Gamma_{j,l}^{\gamma,\alpha},
\end{equation}
where $a_l=x_{l-1/2}$ and $b_l=x_{l+1/2}$ for $l<j$ and $b_l=\hat x_{j}^\gamma$ for $l=j$, 
and $u_l^\alpha=\frac{1}{2}(b_l+a_l)+ \frac{1}{2}(b_l-a_l) s_\alpha$.
The partial flux $\Gamma_j^\gamma(u_l^\alpha)$ is approximated by
\begin{equation}
\Gamma_{j,l}^{\gamma,\alpha} = 
\frac{1}{2}(b_J-a_J) \sum_{\beta=1}^Q \omega_\beta A(u_l^\alpha,y_J^\beta) n_h(t,y_J^\beta) 
+ \sum_{i=J+1}^N \frac{h_i}{2} \sum_{\beta=1}^Q \omega_\beta A(u_l^\alpha,\hat x_i^\beta)
n_h(t,\hat x_i^\beta),
\end{equation}
where $a_J=\hat x_{j}^\alpha- u_{l}^\alpha$, $b_J=x_{J+1/2}$, and
$y_J^\beta=\frac{1}{2}(b_J+a_J)+ \frac{1}{2}(b_J-a_J) s_\beta$.
The index $J$ is chosen such that $\hat x_j^\gamma -u_l^\alpha\in I_J$.
Here, the index $J$ depends on $j$, $\gamma$, $l$ and $\alpha$.

For breakage the flux $F_b(t,\hat x_{j}^\gamma)$ is approximated by
\begin{equation}
F_{b,j}^\gamma = -\sum_{l=j}^{N} \frac{b_l-a_l}{2} \sum_{\alpha=1}^Q \omega_\alpha 
n_h(t,u_{l}^\alpha) G_{j,l}^{\gamma,\alpha},
\end{equation}
where $a_l=\hat x_{j}^\gamma$ for $l=j$ and $a_l=x_{l-1/2}$ for $l>j$, $b_l=x_{l+1/2}$, and
$u_l^\alpha=\frac{1}{2}(b_l+a_l)+ \frac{1}{2}(b_l-a_l) s_\alpha$.
The partial flux $G_j^\gamma(u_l^\alpha)$ is approximated by
\begin{equation}
G_{j,l}^{\gamma,\alpha}
= \sum_{i=1}^j \frac{b_i-a_i}{2} \sum_{\beta=1}^Q \omega_\beta B(u_{i}^\beta,u_{l}^\alpha),
\end{equation}
where $a_i=x_{i-1/2}$, $b_i=x_{i+1/2}$ for $i<j$ and $b_i=\hat x_{j}^\gamma$ for $i=j$, and
$u_i^\beta=\frac{1}{2}(b_i+a_i)+ \frac{1}{2}(b_i-a_i) s_\beta$.


\begin{thebibliography}{10}

\bibitem{BL07} J.P. Bourgade and F. Filbet. 
 \newblock \href{https://doi.org/10.1090/S0025-5718-07-02054-6}
 {Convergence of a finite volume scheme for coagulation-fragmentation equations.}
 \newblock {\em Mathematics of Computation}, 77:851-882, 2007.

\bibitem{DGS92} P.B. Dubovskii, V.A. Galkin and I.W. Stewart. 
 \newblock \href{https://doi.org/10.1088/0305-4470/25/18/009}
 {Exact solutions for the coagulation-fragmentation equations.}
 \newblock {\em Journal of Physics A: Mathematical and General}, 25(18):4737-4744, 1992.

\bibitem{EEE94} L.D. Erasmus, D. Eyre and R.C. Everson.
 \newblock \href{https://doi.org/10.1016/0098-1354(94)E0007-A}
 {Numerical treatment of the population balance equation using a spline-Galerkin method.}
 \newblock {\em Computers and Chemical Engineering}, 18(9):775-783, 1994.
 
\bibitem{EZH84} M.H. Ernst, R.M. Ziff and E.M. Hendriks.
 \newblock \href{https://doi.org/10.1016/0021-9797(84)90292-3}
 {Coagulation processes with a phase transition.}
 \newblock {\em Journal of Colloid and Interface Science}, 97(1):266-277, 1984.

\bibitem{ELMP03} M. Escobedo, P. Lauren\c{c}ot, S. Mischler and B. Perthame.
 \newblock \href{https://doi.org/10.1016/S0022-0396(03)00134-7}
 {Gelation and mass conservation in coagulation-fragmentation models.}
 \newblock {\em Journal of Differential Equations}, 195(1):143-174, 2003.

\bibitem{FL04}F. Filbet and P. Lauren\c{c}ot. 
 \newblock \href{https://doi.org/10.1137/S1064827503429132}
 {Numerical simulation of the Smoluchowski coagulation equation.}
 \newblock {\em SIAM Journal on Scientific Computing}, 25(6):2004-2028, 2004.
 
\bibitem{GST01}S. Gottlieb, C.-W. Shu and E. Tadmor.
 \newblock \href{https://doi.org/10.1137/S003614450036757X}
 {Strong stability-preserving high-order time discretization methods.}
 \newblock {\em SIAM Review}, 43(1):89-112, 2001.
 
\bibitem{HN96}P.J. Hill and Ka M. Ng. 
 \newblock \href{https://doi.org/10.1002/aic.690420611}
 {Statistics of multiple particle breakage.}
 \newblock {\em AIChE Journal}, 42(6):1600-1611, 1996.

\bibitem{Ku06} J. Kumar. 
 \newblock \href{http://nbn-resolving.de/urn:nbn:de:101:1-201010182122}
 {Numerical approximations of population balance equations in particulate systems.}
 \newblock {\em Otto-von-Guericke University Magdeburg}, Germany, PhD thesis, 2006.

\bibitem{KKW13} R. Kumar, J. Kumar and G. Warnecke.
 \newblock \href{https://doi.org/10.1142/S0218202513500085}
 {Moment preserving finite volume schemes for solving population balance equations incorporating 
 aggregation, breakage, growth and source terms.}
 \newblock {\em Mathematical Models and Methods in Applied Sciences}, 23(7):1235-1273, 2013.

\bibitem{KKW14} R. Kumar, J. Kumar and G. Warnecke.
 \newblock \href{https://doi.org/10.3934/krm.2014.7.713}
 {Convergence analysis of a finite volume scheme for solving non-linear aggregation-breakage 
 population balance equations.}
 \newblock {\em Kinetic and Related Models}, 7(4):713-737, 2014. 

\bibitem{KR96} S. Kumar and D. Ramkrishna. 
 \newblock \href{https://doi.org/10.1016/0009-2509(96)88489-2}
 {On the solution of population balance equations by discretization - I. A fixed pivot technique.}
 \newblock {\em Chemical Engineering Science}, 51(8):1311-1332, 1996.

\bibitem{La02} P.L.C. Lage.
 \newblock \href{https://doi.org/10.1016/S0009-2509(02)00369-X}
 {Comments on the ``An analytical solution to the population balance equation with coalescence and 
 breakage-the special case with constant number of particles'' by D.P. Patil and J.R.G. Andrews.}
 \newblock {\em Chemical Engineering Science}, 57(19):4253-4254, 2002.

\bibitem{La04} W. Lamb. 
 \newblock \href{https://doi.org/10.1002/mma.496}
 {Existence and uniqueness results for the continuous coagulation and fragmentation equation.}
 \newblock {\em Mathematical Methods in the Applied Sciences}, 27(6):703-721, 2004.

\bibitem{LM00} K. Lee and T. Matsoukas. 
 \newblock \href{https://doi.org/10.1016/S0032-5910(99)00270-3}
 {Simultaneous coagulation and break-up using constant-N Monte Carlo.}
 \newblock {\em Powder Technology}, 110(1-2):82-89, 2000.
 
\bibitem{LY14} H. Liu and H. Yu.
 \newblock \href{https://doi.org/10.1137/130935161}
 {Maximum-principle-satisfying third order discontinuous Galerkin schemes for Fokker-Planck 
 equations.}
 \newblock {\em SIAM Journal on Scientific Computing}, 36(5), A2296-A2325, 2014.

\bibitem{MM04} G. Madras and B.J. McCoy. 
 \newblock \href{https://doi.org/10.1016/j.powtec.2004.04.022}
 {Reversible crystal growth-dissolution and aggregation-breakage: numerical and moment solutions 
 for population balance equations.}
 \newblock {\em Powder Technology}, 143-144:297-307, 2004.

\bibitem{WR02} A.W. Mahoney and D. Ramkrishna. 
 \newblock \href{https://doi.org/10.1016/S0009-2509(01)00427-4}
 {Efficient solution of population balance equations with discontinuities by finite elements.}
 \newblock {\em Chemical Engineering Science}, 57(7):1107-1119, 2002.

\bibitem{MF05} D.L. Marchisio and R.O. Fox. 
 \newblock \href{https://doi.org/10.1016/j.jaerosci.2004.07.009}
 {Solution of population balance equations using the direct quadrature method of moments.}
 \newblock {\em Journal of Aerosol Science}, 36(1):43-73, 2005.

\bibitem{MLM98} D.J. McLaughlin, W. Lamb and A.C. McBride.
 \newblock 
 \href{https://doi.org/10.1002/(SICI)1099-1476(19980725)21:11<1067::AID-MMA985>3.0.CO;2-X}
 {Existence and uniqueness results for the non-autonomous coagulation and multiple-fragmentation 
 equation.}
 \newblock {\em Mathematical Methods in the Applied Sciences}, 21(11):1067-1084, 1998.

\bibitem{MMG02} S. Motz, A. Mitrovi\'{c} and E.-D. Gilles.
 \newblock \href{https://doi.org/10.1016/S0009-2509(02)00349-4}
 {Comparison of numerical methods for the simulation of dispersed phase systems.}
 \newblock {\em Chemical Engineering Science}, 57(20):4329-4344, 2002. 

\bibitem{Mu28} H. M\"uller.
 \newblock \href{https://doi.org/10.1007/BF02558510}
 {Zur allgemeinen Theorie der raschen Koagulation.}
 \newblock {\em Kolloidchemische Beihefte}, 27:223-250, 1928. 

\bibitem{QW07} S. Qamar and G. Warnecke.
 \newblock \href{https://doi.org/10.1016/j.ces.2006.10.001}
 {Solving population balance equations for two-component aggregation by a finite volume scheme.}
 \newblock {\em Chemical Engineering Science}, 62(3):679-693, 2007.

\bibitem{Ra00} D. Ramkrishna. 
 \newblock \href{http://www.sciencedirect.com/science/book/9780125769709}
 {Population balances. Theory and applications to particulate systems in engineering.}
 \newblock {\em Academic Press}, San Diego, USA, 2000.
 
\bibitem{Sa06} A. Sandu.
 \newblock \href{http://dx.doi.org/10.1080/02786820500543274}
 {Piecewise polynomial solutions of aerosol dynamic equation.}
 \newblock {\em Aerosol Science and Technology}, 40(4):261-273, 2006.

\bibitem{Sc68} W.T. Scott. 
 \newblock \href{https://doi.org/10.1175/1520-0469(1968)025%3C0054:ASOCDC%3E2.0.CO;2}
 {Analytic studies of cloud droplet coalescence I.}
 \newblock {\em Journal of the Atmospheric Sciences}, 25(1):54-65, 1968.

\bibitem{Sm17} M. v. Smoluchowski. 
 \newblock \href{https://doi.org/10.1515/zpch-1918-9209}
 {Versuch einer mathematischen Theorie der Koagulationskinetik kolloider L\"osungen.}
 \newblock {\em Zeitschrift f\"ur physikalische Chemie}, XCII:129-168, 1917.

\bibitem{Va02} M. Vanni. 
 \newblock \href{https://doi.org/10.1006/jcis.1999.6571}
 {Approximate population balance equations for aggregation-breakage processes.}
 \newblock {\em Journal of Colloid and Interface Science}, 221(2):143-160, 2000.

\bibitem{ZS10} X. Zhang and C.-W. Shu.
 \newblock \href{https://doi.org/10.1016/j.jcp.2009.12.030}
 {On maximum-principle-satisfying high order schemes for scalar conservation laws.}
 \newblock {\em Journal of Computational Physics}, 229(9):3091-3120, 2010.

\bibitem{Zi91} R.M. Ziff. 
 \newblock \href{https://doi.org/10.1088/0305-4470/24/12/020}
 {New solutions to the fragmentation equation.}
 \newblock {\em Journal of Physics A: Mathematical and General}, 24(12):2821-2828, 1991.

\bibitem{ZM85} R.M. Ziff and E.D. McGrady. 
 \newblock \href{https://doi.org/10.1088/0305-4470/18/15/026}
 {The kinetics of cluster fragmentation and depolymerisation.}
 \newblock {\em Journal of Physics A: Mathematical and General}, 18(15):3027-3037, 1985.

\end{thebibliography}
\end{document}